\theoremstyle{plain}
\newtheorem{theorem}[equation]{Theorem}
\newtheorem{lemma}[equation]{Lemma}
\newtheorem{prop}[equation]{Proposition}
\newtheorem{cor}[equation]{Corollary}
\theoremstyle{definition}
\newcommand{\bggo}{\mathcal O}
\newcommand{\mf}[1]{\displaystyle{\mathfrak{#1}}}
\newcommand{\comment}[1]{}
\DeclareMathOperator{\spec}{\ensuremath{Spec}}
\DeclareMathOperator{\Gr}{\ensuremath{gr}}
\DeclareMathOperator{\Hom}{\ensuremath{Hom}}
\begin{document}

\title[Hochschild cohomology of deformation quantizations]{Hochschild cohomology of deformation quantizations over $\mathbb{Z}/p^n\mathbb{Z}$}
\author{Akaki Tikaradze}
\address{The University of Toledo, Department of Mathematics, Toledo, Ohio, USA}
\email{\tt tikar06@gmail.com}
\maketitle

\begin{abstract}

Let $A_1$ be an Azumaya algebra over a smooth affine symplectic variety $X$ over $\spec F_p,$
where $p$ is an odd prime. Let $A$ be a deformation quantization of $A_1$ over the $p$-adic integers.
In this note we show that for all $n\geq 1,$ the Hochschild cohomology of $A/p^nA$ is isomorphic
to the de Rham-Witt complex $W_n\Omega^{*}_X$ of $X $ over $\mathbb{Z}/p^n\mathbb{Z}.$ 
We also compute the center of deformations of certain affine Poisson varieties over $F_p.$ 

\end{abstract}

\vspace*{0.5in}

Let $\bf{k}$ be a perfect field of characteristic $p>2.$
 For $n\geq 1$, let $W_n(\bold{k})$ denote  the ring of length $n$ Witt vectors over $\bf{k}.$
Also, $W(\bold{k})$ will denote the ring of Witt vectors over $\bf{k}$. 
 Let $X$ be an affine smooth symplectic variety over $\bf{k}.$ Let  $\lbrace, \rbrace$ denote the corresponding 
Poisson bracket on $\bggo_X,$ the structure ring
of $X.$ Let $A_1$ be an Azumaya algebra over $X$ (equivalently over $\bggo_X.)$ 
Thus, we may (and will) identify the center of $A_1$ with $\bggo_X: Z(A_1)=\bggo_X.$
A deformation quantization of  $A_1$ over $W(\bold{k})$ is, by definition, a flat associatiove $W(\bf{k})$-algebra $A$ 
equipped with an isomorphism $A/pA\simeq A_1$ such that for any $a, b\in A$ such that
$a\ mod\ p\in \bggo_X, b\ mod\ p\in \bggo_X,$ one has 
$$\lbrace a\ mod\ p, b\ mod\ p\rbrace=(\frac{1}{p} [a, b])\ mod\ p.$$


 As usual,  for an associative algebra $B$ its Hochschild cohomology will be
 denoted by $HH^*(B).$  
Also, for a commutative ring $S$ over $\bold{k},$ $W_n\Omega^{*}_S$ will denote the de Rham-Witt
complex of $S$ over $W_n(\bold{k}), n\geq 1.$ 
 In the above notation, our main result is the following

\begin{theorem}\label{cohomology}

Let $A_1$ be an Azumaya algebra over an affine symplectic variety $X$ over $\bold{k}.$ Let
$A$ be a deformation quantization of $A_1$ over $W(\bold{k}).$ Then for all $n\geq 1$
we have a canonical isomorphism of graded algebras $HH^*(A/p^nA)\simeq W_n\Omega^{*}_{\bggo_X}.$

\end{theorem}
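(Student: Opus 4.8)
The plan is to reduce the computation to the center of $A/p^nA$ using the Azumaya property, and then to identify the Hochschild cohomology of that center with the de Rham--Witt complex.

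First I would verify that $A/p^nA$ is Azumaya over its center $Z_n:=Z(A/p^nA)$. As $A$ is flat over $W(\bold k)$, the algebra $A/p^nA$ is flat over $W_n(\bold k)$ and reduces modulo $p$ to $A_1$, which is Azumaya over $Z(A_1)=\bggo_X$; a standard deformation argument (lifting the separability idempotent along the nilpotent ideal $(p)\subset A/p^nA$) then shows that $A/p^nA$ is Azumaya over $Z_n$. Since an Azumaya algebra is separable over its center, $Z_n$-linear Hochschild cohomology is concentrated in degree $0$, and a base-change (separability) argument over $W_n(\bold k)\to Z_n\to A/p^nA$ gives a canonical isomorphism of graded algebras $HH^*(A/p^nA)\cong HH^*(Z_n)$, cohomology being taken over $W_n(\bold k)$ (equivalently over $\Z/p^n$, since $W_n(\bold k)$ is formally \'etale over $\Z/p^n$). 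For $n=1$ this is already the theorem: HKR identifies $HH^*(\bggo_X)$ with the polyvector fields $\wedge^*\mathrm{Der}(\bggo_X)$, and the symplectic bivector converts these into $\Omega^*_{\bggo_X}=W_1\Omega^*_{\bggo_X}$. It is here that the symplectic (rather than merely smooth) hypothesis is used, to pass from polyvector fields to forms.

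The next step is to compute $Z_n$, which I claim is isomorphic to $W_n(\bggo_X)$. A $p$-th power lifting a local coordinate is central modulo $p$ but not modulo $p^2$: its commutators are divisible by $p$ and, by the quantization relation $\{-,-\}=(\tfrac1p[-,-])\bmod p$, their leading terms are governed by the Poisson bracket. The obstruction to correcting such an element to an honest central element is exactly the non-existence of antiderivatives in characteristic $p$, and organizing the successive corrections produces the Verschiebung and Teichm\"uller structure of Witt vectors. This yields a canonical ring homomorphism $W_n(\bggo_X)\to Z_n$, which I would prove bijective by reducing modulo $p$ to $Z(A_1)=\bggo_X$ and comparing the $p$-adic filtration on $Z_n$ with the Verschiebung filtration on $W_n(\bggo_X)$.

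It remains to identify $HH^*(Z_n)=HH^*(W_n(\bggo_X))$ with $W_n\Omega^*_{\bggo_X}$ as graded algebras, and this is the main obstacle. Because $W_n(\bggo_X)$ is \emph{not} flat over $W_n(\bold k)$, there is no naive HKR computation, and the genuinely $p$-adic (de Rham--Witt, not de Rham) answer must emerge. I would realize the tower $\{HH^*(A/p^nA)\}_{n\ge1}$ as a Witt complex over $\bggo_X$ in the sense of Langer--Zink and Hesselholt--Madsen: the restriction $R$, Frobenius $F$ and Verschiebung $V$ coming from the tower structure (none of which is induced by an algebra homomorphism), a differential from the Gerstenhaber bracket $[\pi,-]$ with the Kodaira--Spencer (Poisson bivector) class $\pi\in HH^2$ of the deformation $A$, and the algebra structure from the cup product (which matches the wedge product after the symplectic identification above). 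After checking the defining relations ($FV=p$, $FdV=d$, and the Teichm\"uller and projection identities), the universal property of the de Rham--Witt complex produces a canonical morphism $W_n\Omega^*_{\bggo_X}\to HH^*(A/p^nA)$ lifting both the degree-$0$ isomorphism and the $n=1$ case, and I would prove it is an isomorphism by induction on $n$, comparing the short exact sequences relating level $n+1$ and level $n$ (via $R$ and $V$) on the two sides. The step I expect to be hardest is the intrinsic construction of $F$ and $V$ on Hochschild cohomology, together with verifying that the comparison map carries no spurious classes and matches the subtle torsion in $W_n\Omega^*$.
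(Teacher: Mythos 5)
Your opening move contains a genuine error that sinks the first half of the proposal: for $n\geq 2$ the algebra $A_n=A/p^nA$ is \emph{not} Azumaya (not even separable) over its center $Z_n$, so the reduction $HH^*(A_n)\simeq HH^*(Z_n)$ is unavailable. The idempotent-lifting argument fails because the center does not reduce to the center: by the Stewart--Vologodsky description $Z_n\simeq W_n(\bggo_X)$, the image of $Z_n$ under reduction modulo $p$ is the subring $\bggo_X^{p^{n-1}}$ of $p^{n-1}$-st powers, not all of $\bggo_X$. Hence $A_n\otimes_{Z_n}(Z_n/pZ_n)=A_1$ is an algebra over $Z_n/pZ_n$ whose center $\bggo_X$ is a nontrivial purely inseparable extension of the image of $Z_n/pZ_n$ (in particular $\Omega^1_{\bggo_X/\bggo_X^{p^{n-1}}}\neq 0$), and such an algebra cannot be separable; since separability is stable under base change, $A_n$ is not separable over $Z_n$. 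In other words, the separability idempotent you propose to lift along the nilpotent ideal $(p)$ already fails to exist modulo $p$. This error is also the source of your ``main obstacle'': there is no reason to expect $HH^*(W_n(\bggo_X))\simeq W_n\Omega^*_{\bggo_X}$ at all ($W_n(\bggo_X)$ is not smooth over $W_n(\bold{k})$, indeed not even reduced, since $V^{n-1}(f)^2=0$), and the content of the theorem is precisely that the \emph{noncommutative} cohomology $HH^*(A_n)$, not $HH^*(Z_n)$, recovers the de Rham--Witt complex. The Azumaya hypothesis can legitimately be used only at $n=1$, which is how the paper uses it (Schack's theorem, then HKR plus the symplectic identification --- your base case is correct and identical to the paper's).

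Your fallback strategy in the last paragraph --- putting a Witt-complex structure on the tower $\{HH^*(A_n)\}_n$, invoking the Langer--Zink universal property, and inducting on $n$ --- is indeed the paper's actual route, but the items you defer or misassign are exactly where the content lies. The differential is the Bockstein $d$, i.e.\ the connecting homomorphism of the exact sequence of Hochschild complexes $0\to C^*(A_n)\to C^*(A_{2n})\to C^*(A_n)\to 0$, not (without further proof) the Gerstenhaber bracket $[\pi,-]$ you propose. The reduction maps $HH^*(A_{n+1})\to HH^*(A_n)$ of the tower correspond under the comparison map to the de Rham--Witt \emph{Frobenius} $F$ (and the maps $\bar v$, multiplication by $p$ on lifts, correspond to $V$); nothing in the tower is matched with the restriction $R$, so the induction cannot be run against the standard $R$-filtration exact sequences as you suggest. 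Instead, the paper matches the long exact sequence of the bimodule extension $0\to A_1\to A_{n+1}\to A_n\to 0$, whose connecting map it identifies as $\bar r^{\,n-1}d$, against Illusie's exact sequence $W_1\Omega^{*}_{\bggo_X}\xrightarrow{V^n}W_{n+1}\Omega^{*}_{\bggo_X}\xrightarrow{F}W_n\Omega^{*}_{\bggo_X}\xrightarrow{F^{n-1}d}W_1\Omega^{*+1}_{\bggo_X}$, and then concludes by induction and the five lemma. Finally, the Teichm\"uller relation needed for the universal property, $\bar r(d\underline{z})=z^{p-1}dz$, is a genuine computation (Stewart--Vologodsky, Lemma 2.5) that your phrase ``after checking the defining relations'' elides.
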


This isomorphism on the level of centers was obtained (in more general form) by Stewart and Vologodsky \cite{SV}.

Before proving the result, we will recall  Bockstein operations on   
the Hochschild cohomology of algebras over $W_m(\bold{k})$.  

Let $(S, \delta)$ be a flat differential graded algebra over $W(\bold{k}).$ 
We set $S_n=S/p^nS, n\geq 1.$
Then we have a map $v^l:S_n\to  S_{n+l}$ defined as follows: $v^l(z)=p^l\tilde{z}\ mod\ p^{n+l}S,$
where $\tilde{z}$ is a lift of $z$ in $S.$ Clearly $v^l$ is well-defined.
 Also we will denote the quotient map $ S_n\to S_{n-l}$ by $r^l.$  
Clearly $v^l, l\geq 1$ are maps of complexes, while $r^l$ is a homomorphism 
of differential graded algebras. We have the following short
exact sequence of complexes
 
 $$
\begin{CD}
0\to S_n @>v^n>> S_{2n}@> r^n>> S_n\to 0. 
\end{CD}
$$
 
Denote by $\bar{v}^l:H^{*}(S_n)\to H^{*}(S_{n+l}), \bar{r}^l:H^{*}(S_n)\to H^{*}(S_{n-l})$
 the maps induced on cohomologies by $v^l:S_n\to S_{n+l}$ and $r^l:S_n\to S_{n-l}$ respectively. 
 
 Thus we have the following long exact sequence 

$$
\begin{CD}
\cdots\to H(S_n) @>\bar{v}^n>> H(S_{2n})@> \bar{r}^n>> H(S_n) @>d_n>>H(S_n)\to\cdots,
\end{CD}
$$

\noindent here $d_n:H(S_n)\to H(S_n)$ denotes the connecting homomorphism.
 The following lemma is well-known and straightforward

\begin{lemma}\label{identities}
The algebra $(H^*(S_n), d_n)$ is a differential graded algebra in which the following identities hold
$$\bar{r}\bar{v}=\bar{v}\bar{r}=p,\quad d_n\bar{r}=p\bar{r}d_m,\quad \bar{r}d_n\bar{v}=d_n,\quad \bar{v}d_n=pd_n\bar{v}, $$
$$x\bar{v}(y)=\bar{v}(\bar{r}(x)y),\quad \bar{v}(xd_ny)=\bar{v}(x)d_n(\bar{v}y).$$

\end{lemma}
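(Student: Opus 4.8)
The plan is to verify every identity at the cochain level using lifts all the way up to $S$, and then pass to cohomology. Since $S$ is flat over $W(\mathbf{k})$, multiplication by $p$ is injective on $S$; this both legitimizes the repeated ``division by $p^n$'' below and guarantees uniqueness of the elements so produced. Fix a cohomological degree and let $z\in S_n$ be a cocycle, $\delta z=0$. Choose a lift $\hat z\in S$. The cocycle condition means $\delta\hat z\in p^nS$, so by flatness there is a unique $u\in S$ with $\delta\hat z=p^n u$, and moreover $\delta u=p^{-n}\delta^2\hat z=0$. Unwinding the snake lemma for the given short exact sequence shows that the connecting map is
$$ d_n[z]=[\,p^{-n}\delta\hat z \bmod p^n\,]=[u \bmod p^n], $$
and the usual argument (changing $\hat z$ by an element of $p^nS+\im\delta$) shows this is independent of the lift. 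In the same notation one has $\bar v[z]=[p\hat z \bmod p^{n+1}]$ and $\bar r[z]=[\hat z \bmod p^{n-1}]$, where $\bar v,\bar r$ denote the $l=1$ instances.

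Next I establish the differential graded algebra structure asserted in the first sentence. Since $u=p^{-n}\delta\hat z$ is itself a genuine cocycle in $S$, applying the formula a second time gives $d_n^2[z]=[p^{-n}\delta u \bmod p^n]=0$. For the Leibniz rule, if $z,w$ are cocycles with $\delta\hat z=p^n\alpha$ and $\delta\hat w=p^n\beta$, then $\delta(\hat z\hat w)=p^n(\alpha\hat w+(-1)^{|z|}\hat z\beta)$, whence $d_n([z][w])=d_n[z]\,[w]+(-1)^{|z|}[z]\,d_n[w]$. Thus $(H^*(S_n),d_n)$ is a DGA.

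Finally I verify the listed identities by substituting these explicit formulas on representatives. The relations $\bar r\bar v=\bar v\bar r=p$, $d_n\bar r=p\bar r d_m$, $\bar r d_n\bar v=d_n$, and $\bar v d_n=p d_n\bar v$ are all immediate: for instance $r^1v^1(\hat z)=p\hat z\bmod p^n$ at the cochain level yields the first, and the relations involving $d$ follow by tracking how $p^{-n}\delta\hat z$ transforms when the lift $\hat z$ is multiplied by $p$ or reduced modulo a lower power of $p$. The two multiplicative relations are projection formulas: since $v^1(c)=p\tilde c$ for any lift $\tilde c$ and $v^1$ is a map of modules over the restriction $r^1$, at the cochain level $a\cdot v^1(b)=v^1(r^1(a)b)$, giving $x\bar v(y)=\bar v(\bar r(x)y)$; taking $b=p^{-n}\delta\hat w$ in the same formula gives $\bar v(x\,d_n y)=\bar v(x)\,d_{n+1}(\bar v y)$.

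There is no serious obstacle here: the content is entirely formal. The only points requiring care are the one-time check that $d_n$ is well defined (independence of the chosen lift, i.e.\ the standard snake-lemma verification), the bookkeeping of which index $n$ attaches to each occurrence of $d$, $\bar v$, $\bar r$ so that the source and target complexes match throughout, and the Koszul signs in the Leibniz rule. Throughout, flatness of $S$ over $W(\mathbf{k})$ is exactly what makes every division by a power of $p$ both possible and unique.
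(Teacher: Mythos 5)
Your approach is the right one, and it is in fact all the paper has in mind: the paper offers no proof of this lemma at all (it is dismissed as ``well-known and straightforward''), and your cochain-level verification via the explicit formulas $d_n[z]=[\,p^{-n}\delta\hat z \bmod p^n\,]$, $\bar{v}[z]=[\,p\hat z \bmod p^{n+1}\,]$, $\bar{r}[z]=[\,\hat z \bmod p^{n-1}\,]$ is precisely the standard argument being alluded to. Your identification of the connecting homomorphism, the well-definedness check, $d_n^2=0$, the Leibniz rule, and the identities $\bar{r}\bar{v}=\bar{v}\bar{r}=p$, $d_n\bar{r}=p\,\bar{r}d_{n+1}$, $\bar{r}d_{n+1}\bar{v}=d_n$, $\bar{v}d_n=p\,d_{n+1}\bar{v}$, $x\bar{v}(y)=\bar{v}(\bar{r}(x)y)$ are all correct, including the use of flatness (torsion-freeness over the DVR $W(\mathbf{k})$) to make each division by $p^n$ legitimate and unique.

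One step, however, fails as literally written: for the last identity you propose ``taking $b=p^{-n}\delta\hat w$ in the same formula.'' Substituting $b=u \bmod p^n$ (where $\delta\hat w=p^nu$, so $b$ represents $d_ny$) into $a\cdot v^1(b)=v^1(r^1(a)b)$ with $a$ representing $\bar{v}(x)$ gives $\bar{v}(x)\,\bar{v}(d_ny)=\bar{v}\bigl(\bar{r}\bar{v}(x)\,d_ny\bigr)=p\,\bar{v}(x\,d_ny)$; combined with $\bar{v}d_n=p\,d_{n+1}\bar{v}$ this only yields $p\,\bar{v}(x)\,d_{n+1}(\bar{v}y)=p\,\bar{v}(x\,d_ny)$, and you cannot cancel $p$, since $H^*(S_{n+1})$ is a $W_{n+1}(\mathbf{k})$-module that in general has $p$-torsion. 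The repair stays entirely inside your framework and costs one line: represent $d_{n+1}(\bar{v}y)$ by $c=u\bmod p^{n+1}$ (note this is \emph{not} in the image of $v^1$), observe that $r^1(c)=u\bmod p^n$ represents $d_ny$ --- this is exactly your third identity $\bar{r}d_{n+1}\bar{v}=d_n$ --- and apply the mirrored cochain projection formula $v^1(b)\cdot c=v^1\bigl(b\cdot r^1(c)\bigr)$ with $b$ a representative of $x$: both sides are then represented by $p\hat b u\bmod p^{n+1}$, giving $\bar{v}(x)\,d_{n+1}(\bar{v}y)=\bar{v}(x\,d_ny)$ directly, with no spurious factor of $p$.
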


Let $A$ be an associative flat $W(\bold{k})$-algebra, not necessarily satisfying assumptions of
Theorem \ref{cohomology}. For $n\geq 1,$ set $A_n=A/p^nA.$
Recall that the standard Hochschild complex $S=(\bigoplus_n C^*(A_n, A_n), \delta)=(\Hom_{W_n(\bold{k})}(A_n^{\otimes*}, A_{n}), \delta)$
is a differential graded algebra under the cup product.
Thus we may  apply the above constructions to $S.$
Hence we have maps $$\bar{r}:HH^{*}(A_n)\to HH^{*}(A_{n-1}),\quad \bar{v}:HH^{*}(A_n)\to HH^{*}(A_{n+1})$$
$$d:HH^{*}(A_n)\to HH^{*+1}(A_n)$$
satisfying the identities from Lemma \ref{identities}.

The center of the algebra $A_n$ will be denoted by $Z_n.$
Stewart and Vologodsky [\cite{SV}, formula (1.3)] constructed a $W_n(\bold{k})$-algebra homomorphism 
 $\phi_n$ from the ring of length $n$ Witt vectors over $Z_1$ to  $Z_n, $
defined as follows: given  $(z_1, \cdots, z_n)\in W_n(Z_1), z_i\in Z_1, 1\leq i\leq n$, define

$$\phi_n(z_1, \cdots, z_n)=\sum_{i=1}^n p^{i-1}\tilde{z_i}^{p^{n-i}}$$
 where $\tilde{z_i}$ is a lift of $z_i$ in $A.$
They checked that $\phi_n$ is well-defined and $$V\phi_n=\phi_n\bar{v},\quad F\phi_n=\phi_n\bar{r},$$ where
$$V:W_n(Z_1)\to W_{n+1}(Z_1),\quad F: W_n(Z_1)\to W_{n-1}(Z_1)$$ are Verschiebung, respectively
Frobenius maps on the ring of Witt vectors of $Z_1.$

For $z\in Z_n$, denote by
$\underline{z}=\tilde{z}^p\ mod\ p^{n+1}A,  \underline{z}\in Z_{n+1},$ where $\tilde{z}$ is a lift of $z$ in $A.$ 
It was checked in \cite{SV} that $\underline{z}$ is independent of the choice of $\tilde{z}.$
Given an element $a\in Z_1$, we will denote  by $\underline{a}$  the Teichmuller lift of $a$ in $W_n(Z_1).$ Thus
$\phi_n(\underline{a})=\tilde{a}^{p^n}\in Z_n,$ where $\tilde{a}$ is a lift
of $a$ in $A_n.$

It follows from \cite[Lemma 2.5]{SV} that 
$$\bar{r}(d(\underline{z}))=z^{p-1}d(z), z\in Z_{n}.$$





The latter equality, combined with
Lemma \ref{identities} and the construction of the De Rham-Witt complex $W_n\Omega^{*}$ \cite{LZ}
 implies that 
for all $n\geq 1,$ there 
is the unique  homomorphism of
differential graded algebras $$\phi^{*}_n: W_{n}\Omega^{*}_{Z_1}\to HH^*(A_n)$$ such that
$$\bar{v}\phi^*_n=\phi^{*}_{n+1}V,\quad \bar{r}\phi^{*}_{n+1}=\phi^{*}_nF,$$ where 
$$F:W_{n}\Omega^{*}_{Z_1}\to W_{n-1}\Omega^{*}_{Z_1},\quad V:W_{n}\Omega^{*}_{Z_1}\to W_{n+1}\Omega^{*}_{Z_1}$$
denote the Frobenius, respectively Verschiebung maps in the de Rham-Witt complex of $Z_1.$
We can now prove the main result in this note.
\begin{proof}[Proof of Theorem \ref{cohomology}]
Let $A$ be a deformation  quantization of an Azumaya algebra $\tilde{A}$ over $W(\bold{k}).$
Thus we may (and will)  identify $\tilde{A}$ with $A_1=A/pA$ and $\bggo_X$ with $Z_1.$
We will prove by induction on $n$ that the map $\phi_n$ constructed above is an isomorphism. 
Let $n=1.$ Since by the assuption $A_1$ is an Azumaya algebra over $Z_1=Z(A_1)$, it follows  that
$HH^*(A_1)\simeq HH^*(Z_1)$ (\cite{Sc}). On the other hand since $\spec Z_1$ is a symplectic variety 
over $\bold{k},$ the Hochchild cohomology $HH^*(Z_1)$ is isomorphic to $\Omega^{*}_{Z_1}$ 
by the Hochschild-Kostant-Rosenberg theorem.
 Moreover, it is easy to check that given $fdg\in \Omega^1_{Z_1},$ we have
 $$\phi_1(fdg)=f\lbrace g, -\rbrace\in Der(A_1),$$ which agrees with the Hochschild-Kostant-Rosenberg
 isomorphism. Thus, $\phi_1$ is an isomorphism. 

Let $n\geq 1.$ Assume that $\phi_i$ is an isomorphism for all $i\leq n.$
We claim that the connecting homomorphism $\delta_n:HH^*(A_n)\to HH^{*+1}(A_1)$
between Hochschild cohomologies
arising from the exact sequence  of $A_{n+1}$-bimodules 
$$
\begin{CD}
0\to A_1 @>v^n>> A_{n+1} @> r>> A_n\to 0
\end{CD}
$$

\noindent equals ${\bar{r}}^nd.$ Indeed, it follows directly from commutativity of the following diagram 
$$\begin{CD}
HH^*(A_{n}) @>{\bar{v}}^n>> HH^*(A_{2n}) @>\bar{r}^n>> HH^*(A_n) @>d>> HH^{*+1}(A_n)\\
@VV{\bar{r}}^{n-1}V  @VV{\bar{r}}^{n-1}V    @VVidV      @VV{\bar{r}}^{n-1}V\\
HH^{*}(A_1) @>{\bar{v}}^n>> HH^{*}(A_{n+1}) @>\bar{r}>> HH^{*}(A_{n})@>\delta_n>> HH^{*+1}(A_1).\\
\end{CD}$$

We have the following commutative diagram of long exact sequences
$$\begin{CD}
W_1\Omega^{*}_{Z_1} @>V^n>> W_{n+1}\Omega^{*}_{Z_1} @>F>> W_{n}\Omega^{*}_{Z_1} @>F^{n-1}d>> W_{1}\Omega^{*+1}_{Z_1}\\
@VV\phi_1V  @VV\phi_{n+1}V    @VV\phi_nV      @VV\phi_1V\\
HH^{*}(A_1) @>\bar{v}^n>> HH^{*}(A_{n+1}) @>\bar{r}>> HH^{*}(A_{n})@>{\bar{r}}^{n-1}d>> HH^{*+1}(A_1)\\
\end{CD}$$
Here exactness of the top sequence is a well-known property of the de Rham-Witt complex
\cite[Proposition 3.11]{Il}.

Thus, it follows from the inductive assumption that $\phi_{n+1}$ is an isomorphism.
This concludes the proof of Theorem \ref{cohomology}.

\end{proof}

Given a smooth affine variety $Y$ over a ring $S$,  we will denote the
ring of crystalline (or PD) differential operators on $Y$ by $D_Y=D_{Y/S}$.

\begin{cor}
Let $Y$ be a smooth affine variety over $W(\bold{k}).$ For $n\geq 1,$ let  $Y_n$  denote the $\mod$
$p^n$ reduction of  $Y.$ Then $HH^*(D_{Y_n})$ is isomorphic to $W_n\Omega^{*}_{T^{*}_{Y_1}},$
where $T^{*}_{Y_1}$ is the cotangent bundle of $Y_1.$

\end{cor}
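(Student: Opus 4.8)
The plan is to exhibit $D_Y$ as a deformation quantization over $W(\bold{k})$ of the Azumaya algebra $D_{Y_1}$ and then to invoke Theorem \ref{cohomology}. Concretely I would apply the theorem with $A=D_Y$, $A_1=D_{Y_1}$, and $X=T^*_{Y_1}$, so that the task reduces to checking the three hypotheses: that $X$ is a smooth affine symplectic variety, that $A_1$ is Azumaya over $\bggo_X$, and that $A$ satisfies the defining bracket identity of a deformation quantization.

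The first two points are standard. The cotangent bundle $T^*_{Y_1}$ of a smooth affine variety is smooth and affine and carries its canonical (Liouville) symplectic form, so $\spec Z_1$ is symplectic. By the theorem of Bezrukavnikov--Mirkovic--Rumynin, for $Y_1$ smooth over $\bold{k}$ with $p>2$ the ring $D_{Y_1}$ is Azumaya over its center, and the $p$-th power map on functions together with the $p$-curvature map $\partial\mapsto\partial^p-\partial^{[p]}$ identifies this center with $\bggo_{T^*_{Y_1^{(1)}}}$, the regular functions on the Frobenius twist of the cotangent bundle. Since $\bold{k}$ is perfect, the Frobenius of $\bold{k}$ is an automorphism and induces an isomorphism $T^*_{Y_1^{(1)}}\cong T^*_{Y_1}$, whence $W_n\Omega^*_{Z_1}\cong W_n\Omega^*_{T^*_{Y_1}}$.

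The essential point is the third hypothesis. Writing $D_Y\cong\Sym_{\bggo_Y}T_Y$ as a $\bggo_Y$-module by PBW, one sees that $D_Y$ is flat over $W(\bold{k})$ and that base change gives $D_Y/pD_Y\cong D_{Y_1}$. It then remains to verify, for $a,b\in D_Y$ whose reductions lie in $Z_1$, that $\{a\bmod p,\,b\bmod p\}=\tfrac1p[a,b]\bmod p$. As the center of $D_{Y_1}$ is generated by the elements $f^p$ $(f\in\bggo_{Y_1})$ and $\partial^p-\partial^{[p]}$ $(\partial\in T_{Y_1})$, it suffices to check the identity on these generators, a local computation in a $p$-adic Weyl algebra. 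For a coordinate $q$ and $\partial=\partial_q$ one has
$$[\partial^p,q^p]=\sum_{k=1}^{p}\binom{p}{k}^2 k!\, q^{p-k}\partial^{p-k},$$
and since $\binom{p}{k}\equiv 0\bmod p$ for $1\le k\le p-1$, every term with $k<p$ is divisible by $p^2$; dividing by $p$ and reducing gives $\tfrac1p[\partial^p,q^p]\equiv(p-1)!\equiv -1\bmod p$ by Wilson's theorem. This matches the canonical Poisson bracket on $T^*_{Y_1^{(1)}}$, under which the coordinate $q^p$ and the momentum $\partial^p$ are dual; the brackets $\{f^p,g^p\}=0$ and the remaining pairings are handled identically. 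With the bracket identity established, Theorem \ref{cohomology} applies and yields $HH^*(D_{Y_n})=HH^*(D_Y/p^nD_Y)\cong W_n\Omega^*_{Z_1}\cong W_n\Omega^*_{T^*_{Y_1}}$. I expect the main obstacle to be exactly this last verification, together with keeping careful track of the Frobenius twist, which is where perfectness of $\bold{k}$ enters.
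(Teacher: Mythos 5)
Your proposal is correct and follows essentially the same route as the paper: take $A=D_Y$, note $A/p^nA=D_{Y_n}$, invoke Bezrukavnikov--Mirkovic--Rumynin to see that $D_{Y_1}$ is Azumaya over the Frobenius twist of $T^*_{Y_1}$, and apply Theorem \ref{cohomology}. The only difference is that where the paper cites Belov-Kanel--Kontsevich \cite{BK} for the fact that the deformation Poisson bracket on the center is symplectic, you verify it directly via the Weyl-algebra computation $\tfrac1p[\partial^p,q^p]\equiv(p-1)!\equiv-1 \bmod p$ (which is correct, and your explicit handling of the Frobenius twist via perfectness of $\bold{k}$ is a detail the paper leaves implicit).
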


\begin{proof}

Put $A=D_{\tilde{Y}}.$
Then $A_n=A/p^nA=D_{Y_n}, n\geq 1.$
It is well-known that $\tilde{A}=D_{Y_1}$ is an Azumaya algebra over the Frobenius twist
of $T^{*}_{Y_1}=X$-the cotangent bundle of $Y_1 $\cite{BMR}. Also, the corresponding  Poisson bracket
on $X$ is symplectic by \cite{BK}. Thus, Theorem \ref{cohomology} applies directly.

\end{proof}

Next we will establish a generalization of a result by Stewart and Vologodsky [\cite{SV}, Theorem 1].
We will follow their proof very closely.

\begin{theorem}\label{iso}

Let $(X,\lbrace, \rbrace)$ be an affine Poisson normal variety over $\bf{k}$ such that it has an open symplectic leaf.
Let $A$ be a flat associative $W_n(\bf{k})$-algebra, such that $Z(A/pA)=\bggo_X$ and the deformation
Poisson bracket of $Z(A/pA)$ coincides with the bracket of Poisson variety $X.$ Then the map $\phi_m:W_m(\bggo_X)\to Z(A/p^mA)$ 
defined above is an isomorphism for all $1\leq m\leq n.$

\end{theorem}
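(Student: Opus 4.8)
The plan is to prove that $\phi_m$ is an isomorphism by induction on $m$, following the proof of Theorem~\ref{cohomology} but retaining only the degree-zero (center) part of the Bockstein sequence, and replacing the Hochschild--Kostant--Rosenberg input---unavailable here, since $A/pA$ need not be Azumaya---by a reduction to the open symplectic leaf together with the normality of $X$. The case $m=1$ is trivial, as $\phi_1\colon W_1(\bggo_X)=\bggo_X\to Z_1=\bggo_X$ is the identity. Assume $\phi_i$ is an isomorphism for all $i\le m$, with $m+1\le n$. Taking $A_{m+1}$-bimodule invariants ($=HH^0$) in the exact sequence $0\to A_1\to A_{m+1}\to A_m\to 0$ gives the exact sequence $0\to Z_1\to Z_{m+1}\xrightarrow{\bar{r}}Z_m$, which together with the Witt-vector sequence $0\to W_1(\bggo_X)\xrightarrow{V^m}W_{m+1}(\bggo_X)\xrightarrow{F}W_m(\bggo_X)$ and the intertwining relations $\bar{v}^m\phi_1=\phi_{m+1}V^m$, $\bar{r}\phi_{m+1}=\phi_m F$ fits into the commutative diagram
$$\begin{CD}
0 @>>> W_1(\bggo_X) @>V^m>> W_{m+1}(\bggo_X) @>F>> W_m(\bggo_X) \\
@. @VV\phi_1V @VV\phi_{m+1}V @VV\phi_mV \\
0 @>>> Z_1 @>\bar{v}^m>> Z_{m+1} @>\bar{r}>> Z_m.
\end{CD}$$

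Injectivity of $\phi_{m+1}$ then follows by a diagram chase. If $\phi_{m+1}(w)=0$, then $\phi_m(Fw)=\bar{r}\phi_{m+1}(w)=0$, so $Fw=0$; since $\bggo_X$ is reduced (being normal), $\Ker F=\im V^m$, so $w=V^m u$; then $\bar{v}^m\phi_1(u)=\phi_{m+1}(w)=0$, and as $\bar{v}^m$ and $\phi_1$ are injective, $u=0$ and $w=0$. This step uses only reducedness of $\bggo_X$, not the symplectic hypothesis.

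The substantive point is surjectivity of $\phi_{m+1}$, for which I would argue geometrically. Let $U\subset X$ be the open symplectic leaf, so $U$ is dense and $A_1|_U$ is a deformation quantization of the smooth symplectic variety $U$; such an algebra is Azumaya over $\bggo_U$, so by the symplectic case already handled in Theorem~\ref{cohomology} the map $\phi^U_{m+1}\colon W_{m+1}(\bggo_U)\to Z(A_{m+1}|_U)$ is an isomorphism. Given $z\in Z_{m+1}$, restrict it to $U$ and write $z|_U=\phi^U_{m+1}(w_1,\dots,w_{m+1})$ with $w_i\in\bggo_U\subset K:=\mathrm{Frac}(\bggo_X)$. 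I claim every $w_i$ lies in $\bggo_X$. Reducing mod $p$ kills all but the leading term of $\phi_{m+1}$, giving $w_1^{p^m}=z\bmod p\in\bggo_X$; as $w_1\in K$ is a root of $T^{p^m}-(z\bmod p)$ and $\bggo_X$ is integrally closed, $w_1\in\bggo_X$. Subtracting $\widetilde{w_1}^{\,p^m}$ from $z$ produces a central element divisible by $p$ (by flatness of $A$ and density of $U$); dividing by $p$ and repeating the argument yields $w_2\in\bggo_X$, and so on. Hence $w\in W_{m+1}(\bggo_X)$ and $\phi_{m+1}(w)=z$, the two sides agreeing on the dense open $U$ inside the $p$-torsion-free algebra $A_{m+1}$.

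The main obstacle is this surjectivity step, and it has two delicate ingredients. First, the reduction to the symplectic leaf requires knowing that the restriction $A_1|_U$ is genuinely Azumaya over $\bggo_U$, so that the symplectic case of Theorem~\ref{cohomology} supplies the isomorphism $\phi^U_{m+1}$; verifying this from the hypotheses (rather than citing it) is where the symplectic structure is really used. Second, the descent from $U$ to $X$ rests entirely on normality: one must check that after each subtraction the relevant Witt component is a $p$-power root of a \emph{globally} regular function---so that integral-closedness of $\bggo_X$ forces it back into $\bggo_X$---and that the repeated division by $p$ is legitimate, which uses flatness of $A$ over $W_n(\mathbf{k})$ together with the density of the leaf. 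Controlling these two points carefully is the crux of the proof.
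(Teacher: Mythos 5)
Your base case and injectivity argument are fine: the diagram chase using $\Ker(F)=\im(V^m)$ for the reduced ring $\bggo_X$, flatness (so $\bar v^m$ is injective), and the intertwining relations is correct, and is essentially equivalent to the injectivity step in the paper. The genuine gap is in your surjectivity step, where you assert that $A_1|_U$ ``is a deformation quantization of the smooth symplectic variety $U$; such an algebra is Azumaya over $\bggo_U$,'' and then invoke Theorem \ref{cohomology} on the leaf. This is not a hypothesis of Theorem \ref{iso}, it does not follow from the hypotheses, and it is false in general. Concretely, let $B$ be an honest deformation quantization of a smooth affine symplectic $U$ (so $B/pB$ is Azumaya) and let $A$ be the algebra of upper triangular $2\times 2$ matrices over $B$. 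Then $A$ is flat, $Z(A/pA)=\{\mathrm{diag}(z,z):z\in Z(B/pB)\}\simeq\bggo_U$, and the deformation bracket computed on diagonal lifts is the symplectic bracket, so $A$ satisfies every hypothesis of Theorem \ref{iso} with $X=U$ its own open leaf; yet $A/pA$ is nowhere Azumaya. The whole point of Theorem \ref{iso} (and of the Stewart--Vologodsky theorem it generalizes) is precisely to treat algebras, such as enveloping algebras and symplectic reflection algebras, for which no Azumaya property is available, so borrowing the isomorphism $\phi^U_{m+1}$ from the Azumaya case is circular: the statement on the leaf is essentially what must be proved. A secondary but real problem is that Theorem \ref{cohomology} requires $A$ to be flat over the full Witt ring $W(\bold{k})$ (its proof uses the Bockstein tower up to $A_{2n}$), whereas here $A$ is only given over $W_n(\bold{k})$, so even granting Azumaya-ness the citation would not apply as stated.

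The paper's proof also reduces to the open leaf and uses normality, but through a purely derivation-theoretic device that needs no Azumaya input. The key lemma is that the map $\tau:W_m(Z_1)\to Der(Z_1,Z_1)$,
$$\tau(z_1,\dots,z_m)=\sum_{i=1}^m z_i^{p^{m-i}-1}\lbrace z_i,-\rbrace,$$
vanishes only when every $z_i$ is a $p$-th power; this is proved on the leaf by transporting $\tau$ through the symplectic form and applying injectivity of the inverse Cartier operator, with normality giving $Z_1\cap S^p=Z_1^p$ at the end. Then, for $x\in Z_m$, the inductive hypothesis writes $x\bmod p^{m-1}=\phi_{m-1}(z)$, and centrality of $x$ forces $\tau(z)=0$ (via the computation in \cite{SV}), so $z=F(z')$; hence $x-\phi_m(z')\in Z_m\cap p^{m-1}A_m=v^{m-1}(Z_1)\subset\phi_m(W_m(Z_1))$, which gives surjectivity. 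If you want to salvage your outline, this lemma is exactly the ingredient you must substitute for the unavailable isomorphism $\phi^U_{m+1}$; your descent-by-normality and divide-by-$p$ manipulations are then subsumed by the Cartier-operator argument.
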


\begin{proof}
We will proceed by induction on $m.$ For $m=1$ there is nothing to prove. As before, we will put
$A_m=A/p^mA, Z_m=Z(A_m).$ Recall also the isomorphisms $v^i:A_m\to p^iA_{m+i}$ defined by multiplying a lift in
$A_{m+i}$ by $p^i, i\leq n-m.$
The following is the key
step (as in \cite{SV}, Lemma 2.4).
\begin{lemma}

Let $\tau:W_m(Z_1)\to Der(Z_1, Z_1)$ be defined as follows $$\tau(z_1,\cdots,z_m)=\sum_{i=1}^mz_i^{p^{m-i}-1}\lbrace z_i,-\rbrace.$$
Then $\tau(z_1,\cdots,z_m)=0$ if and only if $z_i=a_i^p$ for some $a_i\in Z_1.$

\end{lemma}

\begin{proof}
Let $f\in Z_1$ be such that $U=\spec (Z_1)_f$ is an open symplectic leaf of $X.$
Put $S=(Z_1)_f$, let $\omega$ be the symplectic form of $U.$ Transferring the equality
$\tau(z_1,\cdots,z_n)=0$ via the isomorphism $\omega:Der_{\bf{k}}(S,S)\simeq \Omega^1_S$ to $\Omega^1_S,$ we obtain $$\sum_{i=1}^mz_i^{p^{m-i}-1}dz_i=0.$$
Recall that the inverse Cartier map $C^{-1}:\Omega^1_S\to\Omega^1_S/dS,$
which is defined as follows $C^{-1}(fdg)=f^pg^{p-1}dg,$ is an injective homomorphism onto
$H^1(\Omega^{*}_S)\subset \Omega^1_S/dS.$ Thus, we obtain
$$C^{-1}(\sum_{i<m}z_i^{p^{m-i}-1}dz_i)=0\in \Omega^1/dS$$
Hence by injectivity of $C^{-1}$ we conclude that $$\sum_{i=1}^{m-1}z_i^{p^{m-1-i}-1}dz_i=0\in \Omega^1_S.$$
Using induction assumption, we conclude that $z_i\in S^p, i>1.$ Thus $dz_1=0$, so $z_1\in S^p.$
So, for each $i, z_i\in Z_1\cap S^p.$ 
Since by the assumption $Z_1$ is a normal domain, 
we have that $Z_1\cap S^p=(Z_1)^p.$
Hence $z_i\in Z_1^p$ and we are done.

\end{proof}
Now we can prove the theorem. Let $x\in Z_m.$ Then by the inductive assumption 
$x\mod p^{m-1}=\phi_{m-1}(z)$ 
for some $z\in W_{m-1}(Z_1).$
It follows from computations in [\cite{SV}, proof of Lemma 2.5] that
$$\tau(z)=(\frac{1}{p^{m-1}}[\phi_{m-1}(z),-]\mod p)|_{Z_1}=(\frac{1}{p^{m-1}}[x,- ]\mod p)|_{Z_1}=0.$$ 
So by the above lemma, $z=F(z')$, for some
$z'\in W_m(Z_1)$. Hence we conclude that $\phi_n(z')-x\in p^{m-1}A_m.$ So $Z_m\subset \phi_m(W_m(Z_1))+p^{m-1}A_m.$
But $$Z_m\cap p^{m-1}A_m=v^{m-1}(Z_1)\subset \phi_m(W_m(Z_1)).$$
 Therefore $Z_m=\phi_m(W_m(Z_1)).$ 

It remains to check that $\phi_m$ is injective. Let $0\neq x\in W_{m}(Z_1)$. Hence we may write
$x=V^i(z)$, for some i and $z\in W_{m-i}(Z_1)\setminus VW_{m-i-1}(Z_1).$
We have 
 $$\phi_m(x)=\phi_m(V^i(z))=v^i(\phi_{m-i}(z)).$$
Clearly $\phi_i(z)\in A_{m-i}\setminus pA_{m-i}.$ Hence $\phi_m(V^i(z))\neq 0.$
\end{proof}

The above result can be used to compute centers of large class of algebras over $W_{n}(\bf{k}),$
including various enveloping algebras of Lie algebras and symplectic reflection algebras.

\end{document}